\DeclareMathAlphabet{\mathbbmsl}{U}{bbm}{m}{sl}
\DeclareMathAlphabet{\mathup}{OT1}{\familydefault}{m}{n}
\newcommand{\dx}[1]{\mathop{}\!\mathup{d} #1}
\DeclarePairedDelimiter{\abs}{\lvert}{\rvert}
\DeclarePairedDelimiter{\norm}{\lVert}{\rVert}
\DeclarePairedDelimiter{\bra}{(}{)}
\DeclarePairedDelimiter{\pra}{[}{]}
\DeclarePairedDelimiter{\set}{\{}{\}}
\DeclarePairedDelimiter{\skp}{\langle}{\rangle}
\newcommand{\customlabel}[2]{%
   \protected@write \@auxout {}{\string \newlabel {#1}{{#2}{\thepage}{#2}{#1}{}} }%
   \hypertarget{#1}{}
}
\newcommand{\oset}[3][0ex]{%
  \mathrel{\mathop{#3}\limits^{
    \vbox to#1{\kern-2\ex@
    \hbox{$\scriptstyle#2$}\vss}}}}
\numberwithin{figure}{section}
\numberwithin{equation}{section}
\def\bbone{{\mathds{1}}}
\newcommand{\bs}[1]{{\boldsymbol{#1}}}
\newcommand{\R}{{\mathds{R}}}
\newcommand{\Rd}{{\R^d}}
\newcommand{\Rdzero}{{\Rd\setminus\set*{0}}}
\newcommand{\Rdx}{{\Rd\setminus\set*{x}}}
\newcommand{\Rddiag}{{\R^{2d}_{\!\scriptscriptstyle\diagup}}}
\newcommand{\jh}{{\hat{\jmath}}}
\newcommand{\jhup}{{\hat{\textnormal{\j}}}}
\newcommand{\jhupbold}{{\hat{\textnormal{\textbf{\j}}}}}
\newcommand{\babla}{\overline{\nabla}}
\newcommand{\dgrad}{\overline{\nabla}}
\newcommand{\eps}{\varepsilon}
\DeclareMathOperator{\supp}{supp}
\DeclareMathOperator{\CE}{CE}
\DeclareMathOperator{\NCE}{NCE}
\DeclareMathOperator{\AC}{AC}
\DeclareMathOperator{\Id}{Id}
\newcommand{\A}{{\mathcal{A}}}
\newcommand{\tA}{{\widetilde{\mathcal{A}}}}
\newcommand{\Ctheta}{{C_{\mathsf{mom}}}}
\newcommand{\ctheta}{{c_{\mathsf{nd}}}}
\newcommand{\Csupp}{{C_{\mathsf{supp}}}}
\newcommand{\Mplus}{\calM^+}
\newcommand{\anabla}{{\mathpalette\a@nabla\relax}}
\newcommand\a@nabla[2]{%
	\setbox\z@=\hbox{$\m@th#1\bigtriangledown$}%
	\ht\z@.7\ht\z@
	\raise\dp\z@\box\z@
}
\newcommand{\subalign}[1]{%
  \vcenter{%
    \Let@ \restore@math@cr \default@tag
    \baselineskip\fontdimen10 \scriptfont\tw@
    \advance\baselineskip\fontdimen12 \scriptfont\tw@
    \lineskip\thr@@\fontdimen8 \scriptfont\thr@@
    \lineskiplimit\lineskip
    \ialign{\hfil$\m@th\scriptstyle##$&$\m@th\scriptstyle{}##$\hfil\crcr
      #1\crcr
    }%
  }%
}
\def\calA{{\mathcal A}}  
\def\calD{{\mathcal D}} \def\calE{{\mathcal E}} 
\def\calG{{\mathcal G}}  
\def\calM{{\mathcal M}}  
\def\calP{{\mathcal P}}  
 \def\calT{{\mathcal T}}
  \def\scrL{{\mathscr  L}}
\def\bbD{{\mathbb D}}
 \def\bbN{{\mathbb N}} 
 \def\bbT{{\mathbb T}}
\newcommand{\jup}{\textnormal{j}}
\newcommand{\jupbold}{\textnormal{\textbf{j}}}
\newcommand{\vup}{\textnormal{v}}
\newcommand{\vupbold}{\textnormal{\textbf{v}}}
\newcommand{\wup}{\textnormal{w}}
\begin{document}

\TitleLanguage[EN]
\title{Graph-to-local limit for a  multi-species nonlocal cross-interaction system}

\author{\firstname{Antonio} \lastname{Esposito}\inst{1,}%
\footnote{e-mail \ElectronicMail{antonio.esposito@maths.ox.ac.uk}.}} 
\address[\inst{1}]{\CountryCode[EN] Mathematical Institute, University of Oxford, Woodstock Road, Oxford, OX2 6GG, United Kingdom}

\author{\firstname{Georg} \lastname{Heinze}\inst{2,}%
     \footnote{Corresponding author. E-mail \ElectronicMail{ georg.heinze@uni-a.de}.}}
\address[\inst{2}]{\CountryCode[DE] Faculty of Mathematics, University of Augsburg, Universitätsstraße 12a, 86159 Augsburg, Germany}

\author{\firstname{Jan-Frederik} \lastname{Pietschmann}\inst{2,}%
\footnote{e-mail \ElectronicMail{jan-f.pietschmann@uni-a.de}.}
     }
\author{\firstname{André} \lastname{Schlichting}\inst{3,}%
     \footnote{e-mail \ElectronicMail{a.schlichting@uni-muenster.de}.}
     }
\address[\inst{3}]{\CountryCode[DE] Institute for Analysis and Numerics, University of Münster, Orléans-Ring 10, 48149 Münster, Germany}

\AbstractLanguage[EN]
\begin{abstract}
    In this note we continue the study of nonlocal interaction dynamics on a sequence of infinite graphs, extending the results of ~\cite{EHS23} to an arbitrary number of species. 
    Our analysis relies on the observation that the graph dynamics form a gradient flow with respect to a non-symmetric Finslerian gradient structure. Keeping the nonlocal interaction energy fixed, while localising the graph structure, we are able to prove evolutionary $\Gamma$-convergence to an Otto-Wassertein-type gradient flow with a tensor-weighted, yet symmetric, inner product. 
    As a byproduct this implies the existence of solutions to the multi-species non-local (cross-)interacation system on the tensor-weighted Euclidean space.
\end{abstract}
\maketitle

\section{Introduction}

In this work we consider the local limit of a system of nonlocal (cross-)interaction equations on graphs, introduced in \cite{HPS21}. We mildly extend the recent work \cite{EHS23}, which established the local limit for a single species by introducing $N-1$ additional species and a coupling between different species via the nonlocal interaction energy functional. 

Following~\cite{EPSS2021,HPS21, EspositoPatacchiniSchlichting2023}, our model is set on an abstract graph, where vertices are represented by a Radon measure $\mu\in \calM^+(\Rd)$, which we call the \emph{base measure}. Intuitively, the support of $\mu$ defines the underlying set of vertices, i.e. $V = \supp\mu$. In particular, any finite graph can be represented by choosing $\mu=\mu^n=\sum_{i=1}^n\delta_{x_i}/n$, for $x_1, x_2,\dots,x_n\in\Rd$. Vertices are connected according to a given \emph{edge weight map} $\eta: \Rddiag \to [0,\infty)$, with $\Rddiag\coloneqq\set{(x,y)\in\Rd\times\Rd:x\ne y}$. Consequently, the set of edges is given as $G=\set*{ (x,y)\in \Rddiag: \eta(x,y)>0}$ and mass from a vertex $x\in \Rd$ can be nonlocally transported to~$y\in \Rd$ along a channel with capacity/weight given by $\eta(x,y)$. We remark that the model also extends to multiple graphs $(\mu^{(i)},\eta^{(i)})$, $i=1,\ldots,N$, one for each species, but we consider for simplicity in the sequel only a single graph $(\mu,\eta)$.
As our notion of graph allows for isolated points,  the local gradient and divergence operators are replaced by nonlocal analogues. Precisely, we define the nonlocal gradient of a function $f:\Rd\to\R$ and the nonlocal divergence is then given for a signed Radon measure $j\in\calM(G)$ by
\begin{align}\label{eq:NL-grad_NL-div_intro}
    \babla f(x,y) \coloneqq f(y)-f(x)\quad\text{and}\quad
    \babla\cdot j(\dx x) \coloneqq \frac12\int_{\Rd\setminus\!\set*{x}} \eta(x,y) \dx( j(x,y)-j(y,x)),
\end{align}

Another structural property of a graph is that, in contrast to a local setting, probability densities are defined on vertices, while fluxes and velocities are defined on edges. In order to cope with this fact, one needs to introduce a suitable interpolating function into the kinetic relation and hence the dynamics. In the present work we employ an upwind interpolation. We refer to \cite{EPSS2021,EspositoPatacchiniSchlichting2023,HPS21} for more details. Another important aspect is to deal with a large number of entities, for instance individuals or data; hence it is crucial to consider discrete and continuum models. The setup introduced for a single species in~\cite{EPSS2021} and extended to two species in \cite{HPS21} allows to consider both descriptions in a unified framework, as follows.

The nonlocal (cross-)interaction system of $N\in\bbN$ equations we consider can be specified through three elements: a \emph{nonlocal continuity equation}, an \emph{upwind flux interpolation}, and a \emph{constitutive relation} for a nonlocal velocity. 
The nonlocal continuity equation describes the time-evolution of an $N$-tuple of probability measures $\rhoup_t =(\rho_t^{(1)},\ldots,\rho_t^{(2)}) \in (\calP(\Rd))^N$, for $t$ in a time interval $[0,T]$ via the equations
\begin{subequations}\label{eq:intro:NLNL}
\begin{equation}\label{eq:intro:NL-CE}
  \partial_t \rho^{(i)}_t + \babla\cdot j^{(i)}_t = 0, \qquad i=1,\ldots,N.
\end{equation}
Here, the flux is a time-dependent pair of antisymmetric measures, $\jup_t = (j^{(1)}_t,\ldots,j^{(N)}_t) \in (\calM(G))^N$. We will use the shorthand notation $(\bs \rhoup,\jupbold) =((\rhoup_t)_t,(\jup_t)_t)\in \NCE_T$ for any solution of~\eqref{eq:intro:NL-CE}, cf.~Definition~\ref{def:NL-metric}. Given a two-species nonlocal time-dependent velocity field $\vupbold = (\bs v^{(1)},\ldots,\bs v^{(N)}) =((v_t^{(1)})_t,\ldots,(v_t^{(N)})_t): [0,T]\times G\to \R^N$ the kinetic relation constituting the associated flux is given by the upwind interpolation
\begin{equation}\label{eq:intro:NL-flux}
	\dx j^{(i)}_t(x,y) = v^{(i)}_t(x,y)_+\dx(\rho^{(i)}\otimes\mu)(x,y)-v^{(i)}_t(x,y)_-\dx(\mu\otimes\rho^{(i)})(x,y),\qquad  i=1,\ldots,N, 
\end{equation}
where for $a\in \R$ we denote by $a_+ = \max\set*{a,0}$ and $a_- = \max\set*{-a,0}$ the positive and negative part, respectively. The last element of the model is the identification of the velocity fields in terms of symmetric interaction potentials $K^{(ik)}:\Rd\times\Rd \to \R$, $i,k=1\ldots,N$ and potentials $P^{(i)}:\Rd\to \R$, $i=1,\ldots,N$ by
\begin{equation}\label{eq:intro:NL-velocity}
  v^{(i)}_t(x,y) = - \babla P^{(i)}(x,y)- \sum_{k=1}^N\babla K^{(ik)}\ast \rho^{(k)}_t(x,y),\qquad i=1,\ldots,N.
\end{equation}
\end{subequations}
System~\eqref{eq:intro:NLNL} was introduced in~\cite{HPS21} as a \textit{Finslerian} gradient flow of the nonlocal cross-interaction energy 
\begin{equation}\label{eq:interaction_energy}
	\calE(\rho)=\sum_{i=1}^N\int_\Rd P^{(i)}(x)\dx\rho^{(i)}(x)+\frac{1}{2}\sum_{i,k=1}^N\iint_{\R^{2d}} K^{(ik)}(x,y)\dx\rho^{(k)}(y)\dx\rho^{(i)}(x). 
\end{equation}
Note that the velocity field is given as the nonlocal gradient of the first variation of the energy, that is $v^{(i)}_t = -\babla \frac{\delta\calE(\rho_t)}{\delta\rho^{(i)}}$, where $\frac{\delta\calE(\rho_t)}{\delta\rho^{(i)}} = P^{(i)} +\sum_{k=1}^2 K^{(ik)} \ast \rho^{(k)}$ denotes the variational derivative of $\calE$ with respect to the species $i$ and $(K^{(ik)}\ast \rho^{(k)})(x) = \int_\Rd K^{(ik)}(x,y)\dx \rho^{(k)}(y)$, for any $x\in\Rd$, $i,k=1,\ldots,N$.
In the case $\rho^{(1)},\ldots,\rho^{(N)}\ll\mu$, denoting by~$\rho^{(i)}$  both the measure~$\rho^{(i)}$ and its density with respect to $\mu$, system \eqref{eq:intro:NLNL} reads for $\mu-$a.e. $x$ as
\begin{equation}\label{eq:nlnl-interaction-eq}\tag{$\mathsf{NL^2IE}$}
    \begin{split}
        \partial_t\rho_t^{(i)}(x)&+\int_\Rd \bigg(\babla P^{(i)}(x,y)+\sum_{k=1}^N\babla K^{(ik)}*\rho^{(k)}_t(x,y)\bigg)_- \eta(x,y) \rho^{(i)}_t(x) \dx\mu(y)\\
        &- \int_\Rd \bigg(\babla P^{(i)}(x,y)+\sum_{k=1}^N\babla K^{(ik)}*\rho^{(k)}_t(x,y)\bigg)_+ \eta(x,y) \rho_t^{(i)}(y)\dx\mu(y)=0,
    \end{split} \qquad i=1,\ldots,N.
\end{equation}
We will refer to it as nonlocal-nonlocal in view of the nonlocal nature of the graph.
An intriguing problem is to understand the limiting behaviour of weak solutions to~\eqref{eq:nlnl-interaction-eq} as the graph structure localises, i.e. the range of connection between vertices decreases, while the weight of each connecting edge increases. 
Following a formal argument presented in~\cite[Section 3.5]{EPSS2021}, one expects that these weak solutions approximate, under suitable conditions on the interaction kernels, weak solutions of the nonlocal cross-interaction equation on~$\Rd$. However, as we shall see, the intrinsic geometry of the graph impacts the limiting gradient structure of the equation.
Accordingly, the main goal of this work is to provide a rigorous proof of the local limit of the \eqref{eq:nlnl-interaction-eq} along a sequence of edge weight maps $\eta^\eps:\Rddiag\to[0,\infty)$ defined by
\begin{align}\label{eq:intro:etaeps}
	\eta^\eps(x,y) \coloneqq \frac{1}{\eps^{d+2}}\vartheta\bra*{\frac{x+y}{2},\frac{x-y}{\eps}},
\end{align} 
in terms of a reference connectivity $\vartheta:\Rd\!\setminus\!\set{0}\to [0,\infty)$ satisfying the Assumptions~\eqref{theta1}~--~\eqref{theta4} below. The corresponding set of edges is then denoted $G^{\!\!\:\eps}\coloneqq\set{(x,y)\in\Rddiag:\eta^\eps(x,y)>0}$. The scaling in~\eqref{eq:intro:etaeps} leads to the local evolution
\begin{equation}\label{eq:intro:NLIE:one}\tag{$\mathsf{NLIE}_\bbT$} 
    \partial_t\rho^{(i)}_t=\nabla\cdot\bigg(\rho^{(i)}_t  \bbT\bigg( \sum_{k=1}^N \nabla P^{(i)}+\nabla K^{(ik)}*\rho^{(k)}_t\bigg)\bigg), \qquad i=1,\ldots,N, 
\end{equation}
where the \emph{tensor} $\bbT:\Rd \to \R^{d\times d}$ depends on the nonlocal structure encoded by $\mu$ and $\vartheta$. The limiting equation \eqref{eq:intro:NLIE:one} can be similarly decomposed into three components. First, the  \emph{local continuity equation} on $\Rd$ given as
\begin{subequations}\label{eq:intro:NLIE:sub}
\begin{equation}\label{eq:intro:CE}
	\partial_t \rho^{(i)}_t + \nabla\cdot \hat\jmath^{(i)}_t = 0, \qquad i=1,\ldots,N, 
\end{equation}
where now $\jhup_t = (\jh^{(1)}_t,\ldots,\jh^{(N)}_t)\in (\calM(\Rd;\Rd))^N$ is a time-dependent vector-valued flux.
Second, a \emph{kinetic relation}, for the flux $\jhup_t$ in terms of a vector field $\hat \vup_t = (\hat v^{(1)}_t,\ldots,\hat v^{(N)}_t): \Rd \to \R^{Nd}$ encoding the tensor structure of~\eqref{eq:intro:NLIE:one} as
\begin{equation}\label{eq:intro:fluxtensor}
	\hat\jmath^{(i)}_t(\dx x) = \rho^{(i)}_t(\dx x) \bbT(x) \hat v^{(i)}_t(x) = \rho_t(\dx x) \sum_{l,m=1}^d \bbT_{lm}(x) \hat v^{(i)}_{t,m}(x)e_l, \qquad i=1,\ldots,N . 
\end{equation}
Third, a  \emph{constitutive relation} for the velocity linking to the interaction energy~\eqref{eq:interaction_energy} given by
\begin{equation}\label{eq:intro:velocity}
	\hat v^{(i)}_t = - \nabla P^{(i)}- \sum_{k=1}^N \nabla K^{(ik)} \ast \rho^{(k)}_t  = -\nabla \frac{\delta \calE(\rho_t)}{\delta \rho^{(i)}}, \qquad i=1,\ldots,N .
\end{equation}
\end{subequations}
We provide a rigorous proof for the convergence of \eqref{eq:nlnl-interaction-eq} with $\eta^\eps$ given through~\eqref{eq:intro:etaeps} and $\mu\ll\scrL^d$ satisfying \eqref{mu1}, \eqref{mu2} below to \eqref{eq:intro:NLIE:one}, in case of $C^1$ interaction kernels $K^{(ik)}$, $i,k=1,\ldots,N$ satisfying assumptions \eqref{K1}~--~\eqref{K4} below. Note that the regularity assumptions on $\mu$ are far less restrictive than it seems at first glance, since such $\mu$ are known to be approximated by finite graphs, cf. \cite{EPSS2021} in conjunction with \cite[Appendix B]{EHS23}.
The result is also somewhat sharp regarding the $C^1$-regularity of the kernels $K^{(ik)}$, $i,k=1,\ldots,N$, since for attractive pointy potentials one cannot expect convergence of weak solutions, as pointed out in~\cite[Remark 3.18]{EPSS2021}.

\section{Assumptions and definitions}\label{sec:preliminaries}

\paragraph{Graph}

We state the assumptions on the base measure $\mu\in\Mplus(\Rd)$ and the edge connectivity $\vartheta:\Rd\times(\Rd\!\setminus\!\set*{0})\to[0,\infty)$. We assume $\dx\mu = \widetilde\mu\dx\scrL^d$ such that there exists $\omega_\mu\in C([0,\infty);[0,\infty))$ with $\omega_\mu(\delta)\to 0$ as $\delta\to 0$ and
\begin{align}
	\label{mu1}\tag{$\muup_1$} 
    &\forall\, x,y\in\Rd \text{ it holds } \abs{\widetilde\mu(x)-\widetilde\mu(y)}\le\omega_\mu(\abs{x-y}),\\
	\label{mu2}\tag{$\muup_2$} 
    &\exists\, c_\mu,C_\mu>0 \text{ such that } \forall x\in\Rd \text{ it holds } c_\mu \le\widetilde\mu(x) \le C_\mu.
\end{align}
For the edge connectivity map $\vartheta:\Rd\times(\Rd\!\setminus\!\set*{0})\to[0,\infty)$ we assume there exists a modulus of continuity $\omega_\vartheta\in C([0,\infty);[0,\infty))$ satisfying $\omega_\vartheta(0)=0$, such that
\begin{align}
	&\label{theta1}\tag{$\varthetaup_1$} \forall z\in\Rd \text{ the map } w\mapsto\vartheta(z,w) \text{ is symmetric and continuous on } \set{\vartheta>0};\\
    &\label{theta_new}\tag{$\varthetaup_2$} 
    \forall z,\bar z \in\Rd, w\in\Rd\!\setminus\!\set*{0} \text{ it holds } \abs{\vartheta(z,w)-\vartheta(\bar z,w)}\le \omega_\vartheta(\abs{z-\bar z}); \\
	&\label{theta2}\tag{$\varthetaup_3$}
	\exists\!\: \Csupp>0 \text{ such that }\forall z\in\Rd \text{ it holds } \supp\vartheta(z,\cdot) \subset B_{\Csupp};\\
	&\label{theta3}\tag{$\varthetaup_4$}
	\exists\!\: \Ctheta>0 \text{ such that }\sup_{(z,w)\in\Rd\times(\Rdzero)} \abs{w}^2\vartheta(z,w) \le \Ctheta;\\
	&\label{theta4}\tag{$\varthetaup_5$}
	\exists\!\: \ctheta>0 \text{ such that } \forall z,\xi\in\Rd \text{ it holds} \int_{\Rdzero}\! \abs{w\cdot\xi}^2\vartheta(z,w)\dx w \ge \ctheta \abs{\xi}^2.
\end{align}

\begin{example}
     Consider a tensor $\mathbb{D}\in C(\Rd ; \R^{d \times d})$ uniformly elliptic and bounded, i.e. there exist $0<D_*\le D^* < +\infty$ such that $D_* \Id\leq \mathbb{D} \leq D^* \Id$ in the sense of quadratic forms. We define the edge connectivity
    \begin{align*}
        \vartheta(z,w)=\begin{cases} 
    d(z), & \skp{w,\mathbb{D}(z)^{-1} w}\leq 1 ,\\ 
    0, & \skp{w,\mathbb{D}(z) w}> 1,
    \end{cases} 
    \end{align*}
    where $d(z) = 2/\pra{C_d \bra*{\det \mathbb{D}(z)}^{\frac{1}{2}}}$ and $C_d = \pi^{\frac{d}{2}} /(2\Gamma(\frac{d}{2}+2))$. Then, with $\mu =\scrL^d$ we obtain $\bbT = \bbD$, cf. \cite[Section~2.2]{EHS23}. In particular, for $\mu=\scrL^d$ and $\vartheta(z,w) = \vartheta(w) = \tilde C_d\bbone_{B_1}(w)$, with a suitable dimensional constant $\tilde C_d>0$, we have $\bbT = \Id$.
\end{example}

\paragraph{Nonlocal interaction energy}
	To simplify notation, we drop in what follows the potential energy terms $P^{(i)}$, $i=1,\ldots,N$, since they can be treazed similarly. 
 The remaining two-species nonlocal interaction energy considered in what follows is defined by
	\begin{align*}
		\calE(\rhoup) \coloneqq \frac12\sum_{i,k=1}^N\iint_{\R^{2d}} K^{(ik)}(x,y)\dx\rho^{(i)}(x)\dx\rho^{(k)}(y).
	\end{align*}
	We assume the interaction kernels $K^{(ik)}\colon\Rd\times\Rd\to\R$, $i,k=1,\ldots,N$ to satisfy the following assumptions:
	\begin{align}
		\label{K1}\tag{\textsf{K1}}
		&K^{(ik)}\in C^1(\Rd\times\Rd);\\
		\label{K2}\tag{\textsf{K2}}
		&K^{(ik)}(x,y)=K^{(ik)}(y,x)\quad \text{ for  } (x,y)\in\Rd\times\Rd;\\
		\label{K3}\tag{\textsf{K3}}
		\begin{split}
			&\exists\, L_K>0 \text{ such that for all }(x,y),(x',y')\in\Rd\times\Rd, i,k=1,2 \text{ it holds}\\
			&|K^{(ik)}(x,y)-K^{(ik)}(x',y')|\le L_K\left(|(x,y)-(x',y')|\vee|(x,y)-(x',y')|^2\right);
		\end{split}\\
		\label{K4}\tag{\textsf{K4}}
		&\exists\, C_K>0 \text{ such that for all }(x,y)\in\Rd\times\Rd \text{ it holds }|\nabla K^{(ik)}(x,y)|\le C_K(1+|x|+|y|).
	\end{align}
    Furthermore, we require symmetry of the cross-interactions, $K^{(ik)}=K^{(ki)}$ for $i,k =1,\ldots,N$ with $i\ne k$, in order to ensure that \eqref{eq:nlnl-interaction-eq} is a gradient flow of~$\calE$, see~\cite{DiFrancesco2013}. This assumption can be weakened to $\beta^{(ik)}K^{(ik)}=\beta^{(ki)} K^{(ki)}$ for numbers $\beta^{(ik)}>0$ for $i,k =1,\ldots,N$ with $i\ne k$, as is done in \cite{HPS21} for two species.
\paragraph{Graph quasi-metric structure}

In order to have a graph-analogue of Otto-Wasserstein gradient flows, cf. \cite{Otto2001GeometryPME, AmbrosioGigliSavare2008}, for interaction energies we defined a suitable quasi-metric space, where the quasi-distance is obtained in a dynamical formulation à la Benamou--Brenier, \cite{BenamouBrenier2000}. For this reason, it is crucial to identify paths connecting probability measures, a \emph{nonlocal} continuity equation, and an \emph{upwind-induced} action functional to be minimized, resembling the total kinetic energy.
\begin{definition}[Action]\label{def:NL-action}
	Let $\mu\in\calM^+(\Rd)$ and $\eta:\Rddiag\to[0,\infty)$ as before. For $\rhoup\in(\calP(\Rd))^N$ and $\jup\in(\calM(G))^N$, we define the \emph{nonlocal action density} funcional
	\begin{equation*}
        \A(\mu,\eta;\rhoup,\jup)\coloneqq \begin{cases}
            \frac12\sum_{i=1}^N\iint_G \bra[\big]{v^{(i)}_+(x,y)-v^{(i)}_+(y,x)}\eta(x,y) \dx\rho^{(i)}(x)\dx\mu(y), &\text{if \eqref{eq:intro:NL-flux} holds for $(v^{(1)},\ldots,v^{(N)})$},\\
            \infty, &\text{if there is no such $(v^{(1)},\ldots,v^{(N)})$}.
        \end{cases}
	\end{equation*}
	As proved in \cite[Lemma~2.13]{HPS21} for fixed $\eta$ the functional $\calA(\mu,\eta;\rhoup,\jup)$ is weakly-$^\ast$ lower semicontinuous in $\mu$, $\rhoup$ and $\jup$. If $\A(\mu,\eta;\rhoup,\jup)<\infty$, we denote $\tA(\mu,\eta;\rhoup,\vup)\coloneqq \A(\mu,\eta;\rhoup,\jup)$ the action density with the corresponding velocity $\vup = (v^{(1)},\ldots,v^{(N)})$. Given a pair of curves $(\bs\rhoup,\jupbold)\coloneqq ((\rhoup_t)_{t\in[0,T]},(\jup_t)_{t\in[0,T]})$ with $\rhoup_t\in(\calP(\Rd))^N$ and $\jup_t\in(\calM(G))^N$, we define the \emph{nonlocal action} of $(\bs\rhoup,\jupbold)$ as $\bs\A(\mu,\eta;\bs\rhoup,\jupbold)\coloneqq \int_0^T\A(\mu,\eta;\rhoup_t,\jup_t)\dx t$.
\end{definition} 
\begin{definition}[Nonlocal continuity equaton]\label{def:NL-CE}
We denote by $\NCE_T$ the set of all weak solutions $(\bs\rhoup,\jupbold)$ to the \emph{nonlocal continuity equation} \eqref{eq:intro:NL-CE}. More precisely, these are pairs such that $\bs\rhoup:[0,T]\to(\calP(\Rd))^N$ is a narrowly continuous curve, $\jupbold:[0,T]\to(\calM(G))^N$ is a measurable curve satisfying $\sum_{i=1}^N\int_0^T\iint_G(2\wedge|x-y|)\eta(x,y)\dx \abs{j^{(i)}_t}(x,y)\dx t<\infty$, and for all $\bs\varphi\in C_c^\infty((0,T)\times\Rd)$ it holds 
\begin{equation*}
	\int_0^T\int_\Rd\partial_t\varphi_t(x)\dx\rho^{(i)}_t(x)\dx t +\frac12\int_0^T\iint_G\babla\varphi_t(x,y)\eta(x,y)\dx j^{(i)}_t(x,y)\dx t=0, \qquad i=1,\ldots,N.
\end{equation*}
\end{definition} 
\begin{definition}[Nonlocal extended quasi-metric]\label{def:NL-metric}
We define the \emph{nonlocal extended quasi-metric} at $\varrhoup_0, \varrhoup_1 \in (\calP_2(\Rd))^N$ by
	\begin{equation*}
		\calT^{\otimes N}_{\mu,\eta}(\varrhoup_0,\varrhoup_1) \coloneqq\inf_{(\bs\rhoup,\jupbold)\in\NCE(\varrhoup_0, \varrhoup_1)}\bra[\big]{\bs\A(\mu,\eta;\bs\rhoup,\jupbold)}^{1/2},
	\end{equation*}
    where $\NCE(\varrhoup_0, \varrhoup_1) \coloneqq \set{(\bs\rhoup,\jupbold)\in\NCE_1: \rhoup_0=\varrhoup_0, \rhoup_1=\varrhoup_1}$. When $\mu$ and $\eta$ are clear from the context, we will shorten notation by writing $\calT^{\otimes N}$. In the context of pairs $(\mu,\eta^\eps)$, we will often write $\calT^{\otimes N}_{\eps}$.
\end{definition}
Properties of $\calT^{\otimes N}_{\mu,\eta}$ can be found in \cite[Section~2]{HPS21}, including that it is indeed an extended quasi-metric on $(\calP_2(\Rd))^N$. Furthermore, we emphasize that since the nonlocal continuity equation does not couple different species and the two-species action is a sum of single-species actions, the quasi-metric defined above is in fact a tensor product of single-species quasi-metrics, i.e. $\calT^{\otimes N}_{\mu,\eta}(\varrhoup_0,\varrhoup_1) = \prod_{i=1}^N\calT_{\mu,\eta}(\varrho^{(i)}_0,\varrho^{(i)}_1)$, being $\calT_{\mu,\eta}$ the single-species nonlocal extended quasi-metric.

\begin{definition}[Absolutely continuous curve and metric derivative]\label{def:AC-curves_metric_derivative}
	We denote by $\AC^2([0,T];((\calP_2(\Rd))^N,\calT^{\otimes N}_{\mu,\eta}))$ the \emph{set of $2$-absolutely continuous curves} with respect to $\calT^{\otimes N}_{\mu,\eta}$, that is for such a \emph{$2$-absolutely continuous curve} $\bs\rhoup$ there exists $m\in L^2([0,T])$ such that for all $0\leq s \leq t \leq T$ we have $\calT^{\otimes N}_{\mu,\eta}(\rhoup_s,\rhoup_t) \le \int_s^t m(\tau)\dx \tau$.
    The \emph{(forward) metric derivative} of a $2$-absolutely continuous curve $\bs\rhoup\in\AC^2([0,T];((\calP_2(\Rd))^N,\calT^{\otimes N}_{\mu,\eta}))$ is defined for a.e. $t\in[0,T]$ by
	\begin{align*}
		\abs{\rhoup_t'}_{\mu,\eta}\coloneqq \lim_{\tau\searrow 0} \frac{\calT^{\otimes N}_{\mu,\eta}(\rhoup_t,\rhoup_{t+\tau})}{\tau} = \lim_{\tau\searrow 0} \frac{\calT^{\otimes N}_{\mu,\eta}(\rhoup_{t-\tau},\rhoup_t)}{\tau}.
	\end{align*}
	As for the extendet quasi-metric, we will often shorten the notation by writing  $\abs{\rhoup_t'}_\eps$ instead of $\abs{\rhoup_t'}_{\mu,\eta^\eps}$.
\end{definition}

\begin{definition}[Metric slope and De Giorgi functional]\label{def:ls-Giorgi}
For any $\rhoup\in(\calP_2(\Rd))^N$, we recall the definition of the variational derivative  $\frac{\delta\calE}{\delta \rhoup}(\rhoup) = (\sum_{k=1}^N K^{(1k)}\ast\rho^{(k)},\ldots,\sum_{k=1}^N K^{(Nk)}\ast\rho^{(k)})$, and we define the \emph{graph (quasi-)metric slope} at $\rhoup$ by
\begin{equation*}
    \calD_{\mu,\eta}(\rho) \coloneqq  \tA\bra[\Big]{\mu,\eta;\rhoup, -\dgrad \frac{\delta\calE}{\delta \rhoup}(\rhoup)}. 
\end{equation*}

With this, for any $\bs\rhoup \in \AC^2([0,T];((\calP_2(\Rd))^N,\calT^{\otimes N}))$, the \emph{graph De Giorgi functional} at $\bs\rhoup$ is defined as
\begin{equation*}
	\bs\calG_{\mu,\eta}(\bs\rhoup)\coloneqq\calE(\rhoup_T)-\calE(\rhoup_0)+\frac{1}{2}\int_0^T\bra[\big]{\calD_{\mu,\eta}(\rhoup_\tau) + |\rhoup_\tau'|_{\mu,\eta}^2}\dx \tau.
\end{equation*}
As before, we will often denote $\calD_{\!\eps} \coloneqq \calD_{\mu,\eta^\eps}$ and $\bs\calG_{\!\!\eps} \coloneqq \bs\calG_{\mu,\eta^\eps}$.
\end{definition}

\paragraph{Tensorized local metric structure}
The limiting local structure is also defined in the spirit of~\cite{BenamouBrenier2000} as follows.
\begin{definition}[Tensorized local action]\label{def:L-action}
    Given a Borel measurable, continuous, symmetric, and uniformly elliptic tensor $\bbT:\Rd\to\Rd\times\Rd$, we define the \emph{local action density} by
    \begin{align*}
        \calA_\bbT(\rhoup,\jhup) \coloneqq \sum_{i=1}^N\norm*{\frac{\dx \jh^{(i)}}{\dx\rho^{(i)}}}_{L^2(\rho^{(i)};\R_\bbT^d)}^2 \coloneqq \sum_{i=1}^N\int_\Rd\skp*{\bbT^{-1}(x)\frac{\dx \jh^{(i)}}{\dx\rho^{(i)}}(x),\frac{\dx \jh^{(i)}}{\dx\rho^{(i)}}(x)}\dx\rho^{(i)}(x).
    \end{align*}
    Similar to the graph setting, we denote $\tA_\bbT(\rhoup,\hat\vup) \coloneqq \A_\bbT(\rhoup,\jhup)$ if $\hat\vup$ and $\jhup$ are related by \eqref{eq:intro:fluxtensor}, and we denote the \emph{local action} of a pair of curves $(\bs\rhoup,\jhupbold)$ by $\bs\calA_\bbT(\bs\rhoup,\jhupbold) \coloneqq \int_0^1\calA_\bbT(\rhoup_t,\jhup_t) \dx t$.
\end{definition}  
\begin{definition}[Local continuity equation]\label{def:L-CE}
    We denote by $\CE_T$ the set of all weak solutions $(\bs\rhoup,\jhupbold)$ to the \emph{local continuity equation} \eqref{eq:intro:CE}. More precisely, these are pairs such that $\bs\rhoup:[0,T]\to(\calP(\Rd))^2$ is a narrowly continuous curve, $\jhupbold:[0,T]\to(\calM(\Rd;\Rd))^N$ is a measurable curve satisfying $\sum_{i=1}^N\int_0^T\abs{j^{(i)}_t}(\Rd)\dx t<\infty$, and for all $\bs\varphi\in C_c^\infty((0,T)\times\Rd)$ it holds 
    \begin{equation*}
	   \int_0^T\int_\Rd\partial_t\varphi_t(x)\dx\rho^{(i)}_t(x)\dx t +\int_0^T\int_\Rd\nabla\varphi_t(x)\cdot\dx \jh^{(i)}_t(x)\dx t=0, \qquad i=1,\ldots,N.
    \end{equation*}
\end{definition}
\begin{definition}[Tensorized local metric]\label{def:L-metric}
    We define the \emph{tensorized local metric} at $\varrhoup_0, \varrhoup_1 \in (\calP_2(\Rd))^N$ by
    \begin{align*}
        W_\bbT^{\otimes N}(\varrhoup_0,\varrhoup_1) \coloneqq\inf_{(\bs\rhoup,\jhupbold)\in\CE(\varrhoup_0, \varrhoup_1)}\bra[\big]{\bs\calA_\bbT(\bs\rhoup,\jhupbold)}^{1/2},
    \end{align*}
    where $\CE(\varrhoup_0, \varrhoup_1) \coloneqq \set{(\bs\rhoup,\jupbold)\in\CE_1: \rhoup_0=\varrhoup_0, \rhoup_1=\varrhoup_1}$.
\end{definition}

We notice that in contrast to the graph quasi-metric, the local metric is indeed symmetric. Furthermore, similar to the graph, we have the decoupling $W_\bbT^{\otimes N}(\varrhoup_0,\varrhoup_1) = \prod_{i=1}^N W_\bbT(\varrho^{(i)}_0,\varrho^{(i)}_1)$, being $W_\bbT$ the single-species tensorized local metric.

\begin{definition}\label{def:deGiorgi_local}
Let $\rhoup\in(\calP_2(\Rd))^N$. The metric slope of the nonlocal interaction energy is given by 
\begin{align*}
        \calD_{\;\!\bbT}(\rhoup) 
        \coloneqq \tA_\bbT\bra[\bigg]{\rhoup,-\bbT\nabla\frac{\delta\calE(\rhoup)}{\delta\rhoup}}
        =\sum_{i=1}^N\int_\Rd\skp[\bigg]{\bra[\bigg]{\sum_{k=1}^N \nabla K^{(ik)}\ast\rho^{(k)}},\bbT(x)\bra[\bigg]{\sum_{k=1}^N \nabla K^{(ik)}\ast\rho^{(k)}}}\dx\rho^{(i)}(x).
    \end{align*}
For any $\bs\rhoup \in \AC^2([0,T];((\calP_2(\Rd_{\bbT}))^N,W_\bbT^{\otimes N}))$, the \emph{local De Giorgi functional} at $\bs\rhoup$ is defined as
\begin{equation*}
	\bs\calG_\bbT(\bs\rhoup)\coloneqq\calE(\rhoup_T)-\calE(\rhoup_0)+\frac{1}{2}\int_0^T\bra[\big]{\calD_{\;\!\bbT}(\rhoup_\tau) + |\rhoup_\tau'|^2_{\bbT}}\dx \tau.
\end{equation*}
\end{definition}

\section{Results}\label{sec:results}

The main result of the present work is the graph-to-local limit for the nonlocal interaction equation.

\begin{theorem}[Graph-to-local limit]\label{thm:main_result}
    Let $(\mu,\vartheta)$ satisfy \eqref{mu1}, \eqref{mu2} and \eqref{theta1}~--~\eqref{theta4}. Let $\eta^\eps$ be given by \eqref{eq:intro:etaeps} and assume $K^{(ik)}$, $i,k=1,\ldots,N$ satisfy \eqref{K1}~--~\eqref{K4}. For any $\eps>0$ suppose that $\bs\rhoup^\eps\in\AC^2([0,T];((\calP_2(\Rd))^N,\calT^{\otimes N}_\eps))$ with $(\rhoup_0^\eps)_\eps \subset (\calP_2(\Rd))^N$ such that $\sup_{\eps>0}\sum_{i=1}^N M_2(\rho_0^{(i),\eps}) < \infty$ is a gradient flow of $\calE$ in $((\calP_2(\Rd))^N,\calT^{\otimes N}_\eps)$, that is
  \begin{equation*}
      \bs\calG_{\!\!\eps}(\bs\rhoup^\eps) = 0 \quad \text{for any } \eps>0.
  \end{equation*}
  Then, there exists $\bs\rhoup \in \AC^2([0,T];((\calP_2(\Rd))^N,W_\bbT^{\otimes N}))$ such that  $\rho_t^{(i),\eps} \rightharpoonup \rho^{(i)}_t$ as $\eps\to0$ for all $t\in[0,T]$, $i=1,\ldots,N$ and $\bs \rhoup$ is a gradient flow of $\calE$ in $((\calP_2(\Rd))^N,W_\bbT^{\otimes N})$, that is
    \begin{equation*}
      \bs\calG_\bbT(\bs\rhoup) = 0,
  \end{equation*}
where the tensor is defined by $\bbT(x) \coloneqq\frac12\int_{\Rdzero} w\!\otimes\! w \tilde\mu(x) \vartheta(x,w)\dx w$.
\end{theorem}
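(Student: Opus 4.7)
The strategy is the by-now standard evolutionary $\Gamma$-convergence scheme for gradient flows formulated via the Energy-Dissipation-Balance (EDB): the vanishing of $\bs\calG_{\!\!\eps}$ already gives a De Giorgi-type energy identity, and it suffices to establish (i) compactness of $(\bs\rhoup^\eps)$, (ii) a continuity result for the energy $\calE$, (iii) a liminf inequality for the metric derivative, and (iv) a liminf inequality for the slope, together with the chain rule inequality $\bs\calG_\bbT \ge 0$ that is available on the limit space. Once all four ingredients are in place, passing to $\eps\to 0$ in $0=\bs\calG_{\!\!\eps}(\bs\rhoup^\eps)$ forces $\bs\calG_\bbT(\bs\rhoup)=0$. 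The key observation that makes the multi-species extension essentially a componentwise argument is that both the graph action in Definition~\ref{def:NL-action} and the tensorized local action in Definition~\ref{def:L-action} split as sums over species, while the species coupling enters only through the energy $\calE$ and the slopes via the variational derivatives $\frac{\delta\calE}{\delta\rho^{(i)}}=\sum_{k=1}^N K^{(ik)}\ast\rho^{(k)}$.

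\textbf{Compactness and energy convergence.} First I would use the EDB identity $\bs\calG_{\!\!\eps}(\bs\rhoup^\eps)=0$ together with the lower bound on $\calE$ (coming from \eqref{K3}--\eqref{K4} and the uniform second-moment bound on $(\rhoup_0^\eps)_\eps$) to obtain a uniform estimate on $\int_0^T|\rhoup_\tau^{\eps\prime}|_\eps^2\dx\tau$, and then propagate the second-moment bound in time by a standard Grönwall argument combined with the moment estimate available in the graph setting (as in \cite{EHS23}). This yields equicontinuity in $\eps$ for $(\bs\rhoup^\eps)$ with respect to some weak topology on $(\calP_2(\Rd))^N$, and Ascoli-Arzelà then delivers pointwise narrow convergence $\rho_t^{(i),\eps}\rightharpoonup\rho_t^{(i)}$ up to subsequences. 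The quadratic growth of $K^{(ik)}$ in \eqref{K4} together with the uniform second moments gives continuous convergence $\calE(\rhoup_t^\eps)\to\calE(\rhoup_t)$ for every $t$.

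\textbf{Liminf for the metric derivative.} This is where the single-species machinery of \cite{EHS23} is invoked directly: since both $\bs\calA(\mu,\eta^\eps;\bs\rhoup^\eps,\cdot)$ and $\bs\calA_\bbT(\bs\rhoup,\cdot)$ decompose as sums over $i=1,\dots,N$, and since the nonlocal continuity equation \eqref{eq:intro:NL-CE} is uncoupled across species, the $\Gamma$-liminf inequality for $\calT_\eps \to W_\bbT$ established in the single-species setting applies to each component $\rho^{(i),\eps}$ separately. Summing gives $\int_0^T|\rhoup_\tau'|^2_\bbT\dx\tau \le \liminf_{\eps\to 0}\int_0^T|\rhoup_\tau^{\eps\prime}|^2_\eps\dx\tau$.

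\textbf{Liminf for the slope.} This is the step where the multi-species coupling is genuinely visible and is the main technical obstacle. The graph slope is the action evaluated at the velocity $v^{(i),\eps}(x,y)=-\babla\bra[\big]{\sum_{k}K^{(ik)}\ast\rho^{(k),\eps}}(x,y)$, while the local tensor slope is evaluated at $\hat v^{(i)}(x)=-\nabla\bra[\big]{\sum_{k}K^{(ik)}\ast\rho^{(k)}}(x)$. My plan is to combine two ingredients. First, the $C^1$-regularity and Lipschitz-type bounds of $K^{(ik)}$ in \eqref{K1}, \eqref{K3} together with the narrow convergence of each $\rho^{(k),\eps}$ and a uniform tightness yield that $\sum_k K^{(ik)}\ast\rho^{(k),\eps}$ converges to $\sum_k K^{(ik)}\ast\rho^{(k)}$ in $C^1_{\mathrm{loc}}$; hence the nonlocal gradient $\babla$ applied to it, rescaled via \eqref{eq:intro:etaeps}, behaves asymptotically like $\nabla$ acting on the limit. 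Second, I would insert these velocities as competitors and apply the lower semicontinuity of the action from \cite[Lemma~2.13]{HPS21}, combined with the key identity $\frac12\int w\otimes w\,\vartheta(x,w)\dx w=\bbT(x)/\widetilde\mu(x)$ ensured by \eqref{theta1}--\eqref{theta4}, to identify the limiting quadratic form as the tensor inner product appearing in $\calA_\bbT$. The cross-interaction terms present no additional conceptual difficulty here, since for fixed $i$ the sum $\sum_k\nabla K^{(ik)}\ast\rho^{(k),\eps}$ is just a fixed continuous velocity that converges in $C^1_{\mathrm{loc}}$; the structural argument is identical to the single-species case once each $\rho^{(k),\eps}$ has been controlled by compactness.

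\textbf{Conclusion.} Combining (i)--(iv) with the chain rule inequality $\bs\calG_\bbT(\bs\rhoup)\ge 0$, valid for any $\bs\rhoup\in\AC^2([0,T];((\calP_2(\Rd))^N,W_\bbT^{\otimes N}))$ with finite action, finite slope, and narrowly continuous in time, we obtain
\begin{equation*}
    0\le\bs\calG_\bbT(\bs\rhoup)\le\liminf_{\eps\to 0}\bs\calG_{\!\!\eps}(\bs\rhoup^\eps)=0,
\end{equation*}
which is the desired identity. The hardest step is the liminf for the slope, because it simultaneously requires (a) passing from the nonlocal gradient $\babla$ to $\nabla$ in the argument of the action, using only $C^1_{\mathrm{loc}}$-regularity of the involved potentials, and (b) identifying the limiting tensor structure from the scaling \eqref{eq:intro:etaeps}; the coupling among species is handled uniformly by the product structure of the tensor metric and the splitting of the action.
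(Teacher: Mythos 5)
Your proposal is correct and follows essentially the same strategy as the paper's proof: it decomposes the evolutionary $\Gamma$-convergence into compactness, energy continuity, liminf for the metric derivative, liminf for the slope, and the chain-rule lower bound $\bs\calG_\bbT\ge 0$, and it identifies the same key structural observation that the species coupling enters only through $\calE$ and the variational derivatives $\frac{\delta\calE}{\delta\rho^{(i)}}$, while the metric and metric-derivative structure decouples across species and so reduces to the single-species results of \cite{EHS23}.

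One technical imprecision is worth flagging in your slope-liminf step. You propose to invoke the lower semicontinuity of the action from \cite[Lemma~2.13]{HPS21}, but that lemma gives weak-$^\ast$ lower semicontinuity of $\calA(\mu,\eta;\rhoup,\jup)$ in $(\mu,\rhoup,\jup)$ for a \emph{fixed} edge weight $\eta$; it does not by itself control the family $\eta^\eps$ as $\eps\to 0$, where the very form of the action changes from the upwind graph object to the symmetric tensor object $\calA_\bbT$. The paper's route (following \cite[Propositions~3.6, 3.8, 3.9, 4.2]{EHS23}) instead works with the first variation $\widetilde l^\eps_{\rhoup}(\vup)[\wup]$ of the nonlocal action, shows it equals $\sum_i\int\nabla\varphi^{(i)}\cdot\bbT^\eps\nabla\psi^{(i)}\dx\rho^{(i)}$ up to errors vanishing as $\eps\to 0$, establishes $\bbT^\eps\to\bbT$ strongly in $C(K;\R^{d\times d})$ on compacts, and combines this weak-strong convergence with truncation and mollification arguments. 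Your sketch does recognize that the two essential difficulties are (a) passing from $\babla$ to $\nabla$ and (b) identifying the tensor $\bbT$, so the plan is sound; only the specific lemma you cite is not the tool that carries out this passage.
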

\begin{proof}
    We observe a coupling of the different species only at the level of the energy and metric slopes, but not on the level of the metrics or metric derivatives. Hence, it suffices to consider a singe species, when studying the metric structures.

    Following the strategy of \cite{EHS23}, we recall that $\bs\rhoup^\eps\in\AC^2([0,T];((\calP_2(\Rd))^N,\calT^{\otimes N}_\eps))$ if and only if there exists $\jupbold$ such that $(\bs\rhoup^\eps,\jupbold^\eps)\in\NCE_T^\eps$ and such that $\bs\A(\mu,\eta^\eps;\bs\rhoup^\eps,\jupbold^\eps)<\infty$ (cf. \cite[Proposition~2.31]{HPS21}). To each such $\jupbold^\eps$ we can associate a $\jhupbold^\eps$ such that $(\bs\rhoup^\eps,\jhupbold^\eps)\in\CE_T$ (proof for one species cf. \cite[Proposition~3.1]{EHS23}). By \cite[Proposition~3.3]{EHS23} we also have compactness, in the sense that there exists a sequence of pairs $(\bs\rhoup^\eps,\jhupbold^\eps)\in\CE_T$ and a pair $(\bs\rhoup,\jhupbold)\in\CE_T$ such that $\rho^{(i),\eps}_t\rightharpoonup\rho^{(i)}_t$ narrowly in $\calP(\Rd)$ for a.e. $t\in[0,T]$, $i=1,\ldots,N$ and such that $\int_\cdot\jh^{(i),\eps}_t\dx{t} \oset{\ast}{\rightharpoonup} \int_\cdot\jh^{(i)}\dx{t}$ weakly-$^\ast$ in $\calM((0,T)\times\Rd;\Rd)$ for $i=1,\ldots,N$.

    Next, we identify the limit the local limit of the $\eps$-dependent graph gradient structures as $\eps\to0$. To this end, we consider the first variation of the map $\vup\mapsto\tA(\mu,\eta^\eps;\rhoup,\vup)$, which is given by
    \begin{align*}
        \widetilde l^\eps_{\rhoup}(\vup)[\wup]\coloneqq\frac12\sum_{i=1}^N\iint_{G^{\!\!\:\eps}} w^{(i)}\eta^\eps\pra*{v^{(i)}_+\dx(\rho^{(i)}\otimes\mu)-v^{(i)}_-\dx(\mu\otimes\rho^{(i)})}.
    \end{align*}
    It turns out (proof for one species cf. \cite[Proposition~3.6]{EHS23}) that this nonsymmetric first variation is symmetric up to small errors, which vanish as $\eps\to 0$, and has a tensor structure in the sense that
    \begin{align*}
        \widetilde l^\eps_{\rhoup}(\babla\varphiup)[\babla\psiup]
        &=\sum_{i=1}^N\int_\Rd\nabla \varphi^{(i)}(x)\cdot\bbT^\eps(x)\nabla\psi^{(i)}(x)\dx\rho^{(i)}(x)+o(1), 
        \qquad\forall\varphiup,\psiup\in (C^2_c(\Rd))^N,
    \end{align*}
    with $\bbT^\eps \coloneqq \frac12\int_{\Rdx} (\cdot-y)\!\otimes\!(\cdot-y)\,\eta^\eps(\cdot,y)\dx\mu(y)\in C(\Rd;\R^{d\times d})$. Since for any compact $K\subset \Rd$ these $\eps$-tensors $\bbT^\eps$ converge strongly in $C(K;\R^{d\times d})$ as $\eps\to 0$ to $\bbT$ as in Theorem~\ref{thm:main_result} (cf. \cite[Proposition~3.8]{EHS23}), weak-strong convergence yields for any sequence $\rhoup^\eps \rightharpoonup\rhoup$ narrowly in $(\calP_2(\Rd))^N$ the limit (cf. \cite[Proposition~3.9]{EHS23})
    \begin{align*}
        \lim_{\eps\to 0} \widetilde l^{\eps}_{\rhoup^{\eps}}(\babla\varphiup)[\babla\psiup] &= \sum_{i=1}^N\int_\Rd \nabla \varphi^{(i)}\cdot\bbT\nabla\psi^{(i)}\dx\rho^{(i)}, 
        \qquad\forall\varphiup,\psiup\in (C^2_c(\Rd))^N.
    \end{align*}
    Employing in the graph framework a Cauchy-Schwarz type inequality and in the local framework Fenchel-Moreau duality, we obtain (proof for one species cf. \cite[Proposition~4.1]{EHS23}) for $\bs\rhoup^\eps \rightharpoonup\bs\rho$ narrowly in the sense from before, the lower semicontinuity
     $   \liminf_{\eps\to 0}\frac{1}{2}\int_0^T  \abs[\big]{\bra{\rhoup^\eps_t}'}_\eps^2\dx t\ge \int_0^T|\rhoup'_t|_\bbT^2\dx t$.
    Employing multiple truncation- and mollification arguments, the complexity of which is not increased by the presence of a second species in the variational derivative of the energy, we also obtain (proof for one species, cf. \cite[Proposition~4.2]{EHS23}) for $\bs\rhoup^\eps \rightharpoonup\bs\rho$ narrowly in the sense from before, the lower semicontinuity
        $\liminf_{\eps\to0}\calD_{\!\eps}(\rhoup^{\eps}) \ge \calD_{\;\!\bbT}(\rhoup)$.
    As the energy is narrowly continuous, this yields
        $0 = \liminf_{\eps\to0}\bs\calG_{\!\!\eps}(\bs\rhoup^\eps) \ge \bs\calG_\bbT(\bs\rhoup)$.
    For the local continuity equation, one has (proof for one species cf. \cite[Proposition~A.1]{EHS23})
    \begin{equation}\label{eq:chain-rule}
        \calE(\rhoup_t)-\calE(\rhoup_s)=\int_s^t\sum_{i,k=1}^N\int_\Rd\nabla K^{(ik)}*\rho^{(k)}_\tau(x)\dx j^{(i)}_\tau(x),
    \end{equation}
    being $(\jup_t)_{t\in[0,T]}\subset (\calM(\Rd;\Rd))^N$ such that $(\bs\rhoup,\jupbold)\in\CE_T$.
    
    This can be extended to $\bs\calG_\bbT(\bs\rhoup)\ge 0$ for all $\bs\rhoup \in \AC^2([0,T];((\calP_2(\Rd_{\bbT}))^N,W_\bbT^{\otimes N}))$ (proof for one species cf. \cite[Proposition~4.3]{EHS23}), thereby concluding the proof.
\end{proof}
Having established the link between the zero-level sets of the graph and local De Giorgi functionals, we are now in a position to show the existence of solutions to \eqref{eq:intro:NLIE:one}. This is done in the following theorem.
\begin{theorem}\label{thm:weak_sol_nlie_tensor}
Let $\mu,\vartheta$ satsify \eqref{mu1}, \eqref{mu2}, and \eqref{theta1}~--~\eqref{theta4}, respectively. Consider the tensor $\bbT$ defined in Theorem~\eqref{thm:main_result}. Assume $K^{(ik)}$, $i,k=1,\ldots,N$ satisfy \eqref{K1}~--~\eqref{K4}. Let $\varrhoup_0\in(\calP_2(\Rd))^N$ such that $\varrhoup_0\ll\mu$. There exists a weakly continuous curve $\bs\rhoup:[0,T]\to(\calP_2(\Rd))^N$ such that $\rhoup_t\ll\mu$ for all $t\in[0,T]$ which is a weak solution to \eqref{eq:nlnl-interaction-eq} with initial datum $\rhoup_0 = \varrhoup_0$, that is for the flux $\jhupbold\colon [0,T]\to(\calM(\Rd;\Rd))^N$ defined by
\begin{equation*}
  \dx \jh^{(i)}_t(x)=-\bbT\nabla\frac{\delta \calE}{\delta\rho^{(i)}}(x)\dx\rho^{(i)}_t(x),
\end{equation*}
we have $(\bs \rhoup,\jhupbold)\in\CE_T$.
\end{theorem}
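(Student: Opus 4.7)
The strategy is to realise $\bs\rhoup$ as the local limit of graph gradient flows and then to extract the flux identity from the vanishing of the local De Giorgi functional via a standard chain rule/Cauchy–Schwarz argument. For each $\eps>0$, I would set $\rhoup_0^\eps\equiv\varrhoup_0$, so that $\sup_{\eps>0}\sum_{i=1}^N M_2(\rho_0^{(i),\eps})<\infty$ and $\rhoup_0^\eps\ll\mu$ hold automatically. By the minimising movements scheme developed in \cite{HPS21} (established there for $N=2$ species, with a routine extension to arbitrary $N$), there exists a graph gradient flow $\bs\rhoup^\eps\in\AC^2([0,T];((\calP_2(\Rd))^N,\calT^{\otimes N}_\eps))$ with $\rhoup_0^\eps=\varrhoup_0$ and $\bs\calG_{\!\!\eps}(\bs\rhoup^\eps)=0$. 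Moreover, the upwind structure of the graph flux together with $\varrhoup_0\ll\mu$ propagates $\rho_t^{(i),\eps}\ll\mu$ along the evolution. Applying Theorem~\ref{thm:main_result} to this sequence yields a limit curve $\bs\rhoup\in\AC^2([0,T];((\calP_2(\Rd))^N,W_\bbT^{\otimes N}))$ with $\rho_t^{(i),\eps}\rightharpoonup\rho_t^{(i)}$ narrowly for every $t\in[0,T]$, $\rhoup_0=\varrhoup_0$, and $\bs\calG_\bbT(\bs\rhoup)=0$.

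To read off the flux, I would invoke the Benamou–Brenier characterization of $W_\bbT^{\otimes N}$ behind Definition~\ref{def:L-metric}: there exists $\jhupbold$ with $(\bs\rhoup,\jhupbold)\in\CE_T$ such that $|\rhoup'_\tau|_\bbT^2=\calA_\bbT(\rhoup_\tau,\jhup_\tau)$ for a.e.\ $\tau$, and by the kinetic relation \eqref{eq:intro:fluxtensor} I write $\dx \jh^{(i)}_\tau=\bbT\hat v^{(i)}_\tau\dx\rho^{(i)}_\tau$. The chain rule \eqref{eq:chain-rule} then yields
\begin{equation*}
\calE(\rhoup_T)-\calE(\rhoup_0)=\int_0^T\sum_{i=1}^N\int_\Rd\bigl\langle\bbT(x)\hat v^{(i)}_\tau(x),\nabla\tfrac{\delta\calE(\rhoup_\tau)}{\delta\rho^{(i)}}(x)\bigr\rangle\dx\rho^{(i)}_\tau(x)\dx\tau,
\end{equation*}
and Cauchy–Schwarz in the $\bbT$-weighted $L^2(\rho^{(i)}_\tau)$ inner product combined with Young's inequality $ab\le(a^2+b^2)/2$ gives the lower bound $\calE(\rhoup_T)-\calE(\rhoup_0)\ge-\tfrac12\int_0^T\bigl(|\rhoup'_\tau|_\bbT^2+\calD_{\;\!\bbT}(\rhoup_\tau)\bigr)\dx\tau$. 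Since $\bs\calG_\bbT(\bs\rhoup)=0$, both inequalities are attained as equalities, which pins down $\hat v^{(i)}_\tau=-\nabla\tfrac{\delta\calE(\rhoup_\tau)}{\delta\rho^{(i)}}$ for a.e.\ $\tau$ and $\rho^{(i)}_\tau$-a.e.\ $x$, giving precisely the flux formula in the statement.

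The main obstacle I expect is upgrading $\rho_t^{(i)}\ll\mu$ from almost every $t$ to every $t\in[0,T]$, since by \eqref{mu2} this is equivalent to $\rho_t^{(i)}\ll\scrL^d$, a property that narrow convergence alone does not preserve. The cleanest route is to propagate a uniform-in-$\eps$ $L^p(\mu)$-bound (for some $p\in(1,\infty]$) on the graph densities $\rho_t^{(i),\eps}$, as in the single-species gradient-flow analysis of \cite{EspositoPatacchiniSchlichting2023}, and pass to the limit via Banach–Alaoglu; alternatively, having already identified the continuous velocity $-\bbT\nabla\tfrac{\delta\calE(\rhoup_\tau)}{\delta\rho^{(i)}}$ with linear growth via \eqref{K4}, one can close the argument through a DiPerna–Lions-type stability statement for the limiting transport equation, propagating $\varrhoup_0\ll\scrL^d$ to every time slice.
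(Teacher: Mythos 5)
Your proposal follows essentially the same route as the paper: obtain graph gradient flows from the minimising-movement scheme of \cite{HPS21} (with \cite[Appendix~B]{EHS23} handling the $\eps$-scaled edge weight), apply Theorem~\ref{thm:main_result} to pass to a limit curve with $\bs\calG_\bbT(\bs\rhoup)=0$, and then unfold the chain rule \eqref{eq:chain-rule} together with a Cauchy--Schwarz/Young argument to identify the optimal flux. The paper compresses this last identification step, including the propagation of $\rhoup_t\ll\mu$ for all $t\in[0,T]$ which you rightly flag as not following from narrow convergence alone, into a single citation of \cite[Theorem~4.5]{EHS23}.
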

\begin{proof}
    It is known (cf. \cite[Theorem~3.30]{HPS21} in conjunction with \cite[Appendix B]{EHS23}) that under the above assumptions for any $\eps>0$ there exists $(\bs\rhoup^\eps)_{\eps>0}\subset\AC^2([0,T];((\calP_2(\Rd))^N,\calT^{\otimes N}_{\eps}))$ with $(\rhoup_0^\eps)_\eps \subset (\calP_2(\Rd))^N$ such that $\sup_{\eps>0}\sum_{i=1}^N M_2(\rho_0^{(i),\eps}) < \infty$ with $\bs\calG_{\!\!\eps}(\bs\rhoup^\eps) = 0$. By Theorem \ref{thm:main_result}, this implies the existence of $\bs\rhoup \in \AC^2([0,T];((\calP_2(\Rd))^N,W_\bbT^{\otimes N}))$ with $\bs\calG_\bbT(\bs\rhoup) = 0$. Employing the chain-rule inequality \eqref{eq:chain-rule}, this is equivalent to (cf. \cite[Theorem~4.5]{EHS23}) $\bs\rhoup$ being a weak solution to \eqref{eq:intro:NLIE:one} in the above sense, thereby concluding the proof.
\end{proof}

\begin{acknowledgement}
  AE was supported by the Advanced Grant Nonlocal-CPD (Nonlocal PDEs for Complex Particle Dynamics: Phase Transitions, Patterns and Synchronization) of the European Research Council Executive Agency (ERC) under the European Union’s Horizon 2020 research and innovation programme (grant agreement No. 883363). GH acknowledges support of the German National Academic Foundation (Studienstiftung des deutschen Volkes) and the Free State of Saxony in the form of PhD scholarships. JFP thanks the Deutsche Forschungsgemeinschaft (DFG) for support via the Research Unit FOR 5387 POPULAR, Project No. 461909888. AS is supported by the Deutsche Forschungsgemeinschaft (DFG, German Research Foundation) under Germany's Excellence Strategy EXC 2044 -- 390685587, \emph{Mathematics M\"unster: Dynamics--Geometry--Structure}. 
\end{acknowledgement}

\vspace{\baselineskip}

\end{document}